\newcommand{\GG}{{\cal G}}
\newtheorem{theorem}{Theorem}
\newtheorem{conjecture}{Conjecture}
\newtheorem{lemma}[theorem]{Lemma}
\title{Classes of graphs with small rank decompositions are $\chi$-bounded}
\author{Zden\v{e}k Dvo\v{r}\'ak\thanks{Department of Applied Mathematics and Institute for Theoretical Computer Science (ITI), Faculty of Mathematics and Physics, Charles University, Malostransk{\'e} n{\'a}m.~25, 118 00 Prague, Czech Republic. E-mail: {\tt rakdver@kam.mff.cuni.cz}. Institute for Theoretical Computer Science (ITI) as project 1M0545 of the Ministry of Education of the Czech Republic.
}\and
Daniel Kr\'al'\thanks{Department of Mathematics, University of West Bohemia, Univerzitn\'i 8, 306 14 Pilsen, Czech Republic. This author is also affiliated with Department of Applied Mathematics and Institute for Theoretical Computer Science of Charles University. E-mail: {\tt dankral@kma.zcu.cz}. This author was supported by the grant GA{\v C}R P202/11/0196.}}
\date{}
\begin{document}
\maketitle

\begin{abstract}
A class of graphs $\GG$ is $\chi$-bounded
if the chromatic number of graphs in $\GG$ is bounded by a function of the clique number.
We show that if a class $\GG$ is $\chi$-bounded,
then every class of graphs admitting a decomposition along cuts of small rank
to graphs from $\GG$ is $\chi$-bounded.
As a corollary, we obtain that every class of graphs with bounded rank-width (or equivalently, clique-width)
is $\chi$-bounded.
\end{abstract}

\section{Introduction}

For a graph $G$ and an integer $k$, a \emph{proper $k$-coloring} of $G$ is a function $f:V(G)\to \{1,\ldots, k\}$ such that $f(u)\neq f(v)$
for every edge $uv\in E(G)$.  The \emph{chromatic number} $\chi(G)$ of $G$ is the smallest $k$ such that $G$ has a proper $k$-coloring.
The chromatic number is one of the most studied graph parameters, and although determining it precisely is NP-complete~\cite{garey1979computers},
a number of interesting bounds and connections to other graph parameters is known.  A natural lower bound for the chromatic number
is given by the \emph{clique number} $\omega(G)$ which is the size of the largest complete subgraph of $G$.  At first, it might
be natural to believe that there could exist an upper bound on the chromatic number in the terms of the clique number.
However, that is far from the truth.  Erd\H{o}s~\cite{erd-gircol} showed that for any $g$ there exist graphs of arbitrarily large chromatic number that do not
contain cycles of length at most~$g$.

Therefore, the graph classes where the chromatic and clique numbers are tied together are of an interest. 
The most famous example is the class of \emph{perfect graphs}
which are the graphs $G$ such that $\chi(H)=\omega(H)$ for every induced subgraph $H$ of $G$.
The Strong Perfect Graph Theorem asserts that this class can be alternatively characterized as the class of graphs $G$ such that neither $G$ nor its complement contains an induced odd
cycle of length at least $5$ (Chudnovsky et al.~\cite{sperft}).
The class of perfect graphs includes
line graphs of bipartite graphs, chordal graphs, comparability graphs and others.

For many graph classes, the connection between $\chi$ and $\omega$ is not straightforward.  For example, if $G$ is a circle-arc graph
(an intersection graph of arcs of a circle), then $\chi(G)\le 2\omega(G)$.
This leads to the following definition.
We say that
a class $\GG$ of graphs is \emph{$\chi$-bounded} if there exists a function $f$ such that $\chi(G)\le f(\omega(G))$ for every graph $G\in \GG$.
The examples of $\chi$-bounded graph classes include the circle-arc graphs, the circle graphs~\cite{kostochka1997covering},
line-graphs~\cite{vizing}, and graphs avoiding any fixed tree $T$ of radius at most two as an induced subgraph~\cite{kierstead1994radius}.
Also note that any class of graphs with bounded chromatic number is $\chi$-bounded.

Our work is motivated by the following conjecture of Geelen:
\begin{conjecture}\label{conj-vmin}
For every graph $H$, the class of graphs without a vertex-minor isomorphic to $H$ is $\chi$-bounded.
\end{conjecture}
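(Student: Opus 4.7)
The plan is to combine the paper's main theorem---that decomposing along cuts of small rank into a $\chi$-bounded class preserves $\chi$-boundedness---with a structural decomposition of $H$-vertex-minor-free graphs. Concretely, for each fixed $H$ I would try to exhibit an integer $k=k(H)$ and a $\chi$-bounded class $\GG_H$ such that every graph without $H$ as a vertex-minor admits a decomposition along cuts of rank at most $k$ into members of $\GG_H$. The main theorem would then deliver the desired $\chi$-boundedness of the full class.

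The first step is to reduce to the prime case via Cunningham's split decomposition, which decomposes an arbitrary graph along cuts of rank one into prime pieces. Since each prime piece is itself a vertex-minor of the original graph, it too excludes $H$. The task thus reduces to showing that prime graphs excluding $H$ as a vertex-minor form a $\chi$-bounded class $\GG_H$. The natural route for this is through a vertex-minor analog of the excluded-grid theorem: a prime graph of sufficiently large rank-width should contain a canonical obstruction (for instance a long induced path, a large local-complementation grid, or a large comparability structure) as a vertex-minor. If $H$ is a vertex-minor of every sufficiently large canonical obstruction, then prime $H$-vertex-minor-free graphs would have rank-width bounded by a function of $H$, and the corollary of the main theorem (bounded rank-width implies $\chi$-bounded) would supply the required $\chi$-boundedness of $\GG_H$.

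The main obstacle is clearly this structural step. The theory of excluded vertex-minors is far less developed than the Robertson--Seymour theory for ordinary minors, and a clean excluded-vertex-minor structure theorem is not available in general. Partial structural results, for instance when $H$ is a cycle or a small complete bipartite graph, could plausibly be leveraged to yield Conjecture~\ref{conj-vmin} in those special cases using exactly the decomposition framework of this paper; however, the general conjecture appears to require a substantial new structural input about vertex-minors, and the $\chi$-boundedness machinery developed in this paper, while sufficient to close the gap once such a decomposition is available, does not on its own seem to shed light on producing it.
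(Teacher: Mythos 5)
The statement you are asked about is Conjecture~\ref{conj-vmin}, and the paper does not prove it: it is stated as an open conjecture of Geelen, and the paper only establishes two special cases (Theorems~\ref{thm1} and~\ref{thm2}), the latter by combining Theorem~\ref{thm3} with Geelen's structure theorem for graphs with no $W_5$ vertex-minor. Your proposal is likewise not a proof; it is a program, and you correctly identify its missing ingredient yourself. The genuine gap is exactly the one you name: no structure theorem for graphs excluding a general vertex-minor $H$ is available, so the decomposition whose existence your argument requires cannot be produced. The $\chi$-boundedness machinery of the paper (Theorem~\ref{thm-main} and its corollaries) is the ``easy half'' of this program, and your outline reproduces the paper's strategy for its special cases rather than closing the conjecture.

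Beyond that, one concrete step of your proposed route is not merely unproved but false as stated for most $H$. You reduce via split decomposition to prime pieces and then hope that prime graphs excluding $H$ as a vertex-minor have bounded rank-width. Circle graphs are closed under vertex-minors, contain prime members of unbounded rank-width, and are $\chi$-bounded; hence for any $H$ that is not a circle graph, the prime $H$-vertex-minor-free graphs already include all prime circle graphs and have unbounded rank-width. So the class $\GG_H$ of pieces cannot in general be taken to be a bounded-rank-width class; it must at least contain circle graphs, exactly as in the paper's treatment of $W_5$, where the basic pieces in Geelen's Theorem~\ref{thm-geelen} are circle graphs plus finitely many sporadic graphs. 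Your hedge (``if $H$ is a vertex-minor of every sufficiently large canonical obstruction'') silently restricts to the case where $H$ is a vertex-minor of large comparability grids, i.e.\ essentially to circle-graph $H$, and even there the required grid-type theorem for vertex-minors was not available to the paper. In short: the framework you describe matches the paper's, but the conjecture remains open here, and the specific structural claim you would need is both unknown and, in the bounded-rank-width form you state it, incorrect for general $H$.
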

Recall that a graph $H'$ is a {\em vertex-minor} of a graph $H$ if $H'$ can be obtained
by a series of vertex removals and neighborhood complementations. The {\em neighborhood complementation}
with respect to a vertex $v$ of a graph $G$ is the following operation: if two neighbors of $v$
are joined by an edge, delete this edge from $G$, and if they are not joined, add an edge joining
them.

A possible approach to Conjecture~\ref{conj-vmin} could consist of proving that
every graph without a vertex-minor isomorphic to $H$ admits a decomposition to well-behaved pieces
along simple-structured cuts.
Such an approach has led to proofs of several deep results in structural graph theory.
Examples include the graph minor structure
theorem, where the graphs avoiding a minor of some fixed graph are obtained by joining pieces that are (almost) embedded in surfaces of small genus along small vertex cuts~\cite{robertson2003graph}, and
the proof of the Strong Perfect Graph Theorem~\cite{sperft}.

In this paper, we introduce a new (somewhat technical) way of decomposing graphs
along cuts of small rank. This kind of decomposition will allow us to prove two special
cases of Conjecture~\ref{conj-vmin}.

\begin{theorem}
\label{thm1}
For any $k$, the class of graphs with rank-width at most $k$ is $\chi$-bounded.
\end{theorem}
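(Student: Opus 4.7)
The plan is to derive Theorem~\ref{thm1} from the more general statement signaled by the abstract: any class of graphs admitting a decomposition along cuts of bounded rank into pieces drawn from a $\chi$-bounded class is itself $\chi$-bounded. A graph of rank-width at most $k$ admits a rank-decomposition whose internal cuts all have rank at most $k$ and whose leaves are single vertices, and the class of single-vertex graphs is trivially $\chi$-bounded; so Theorem~\ref{thm1} reduces to proving that closure statement, which is what I would set out to do.

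The structural foundation is the following observation about any cut $(A,B)$ of rank at most $k$: the bipartite adjacency matrix between $A$ and $B$ has at most $2^k$ distinct rows over $GF(2)$, so $A$ partitions into at most $2^k$ classes $A_1,\dots,A_m$ according to their neighborhoods in $B$, and symmetrically $B$ partitions into $B_1,\dots,B_n$; on any cross-pair $(A_i,B_j)$ the edges form either a complete bipartite graph or none at all. I would induct on the size of the rank-decomposition tree: pick a balanced internal edge, split $G$ into $G[A]$ and $G[B]$, each of which inherits a rank-decomposition of width at most $k$ and is colorable by the inductive hypothesis. A crucial quantitative handle is that on every fully-joined pair of types one has $\omega(G[A_i])+\omega(G[B_j])\le\omega(G)$, which controls the clique numbers of the restricted pieces in terms of $\omega(G)$.

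The difficulty lies in assembling the two colorings into a proper coloring of $G$. The natural scheme is to assign, for each type $A_i$, a palette $P_i$ of colors used on $A_i$, and likewise $Q_j$ for each $B_j$, subject to the coordination constraint $P_i\cap Q_j=\emptyset$ whenever $A_i$ and $B_j$ are fully joined; with at most $2^{k+1}$ types in total, this coordination is combinatorially bounded. The main obstacle I anticipate is preventing an exponential blow-up of the color count: naively taking disjoint palettes on each side would cause the number of colors to double at every level of the decomposition, yielding a bound exponential in the tree depth rather than a function of $\omega(G)$ and $k$ alone. Overcoming this requires strengthening the inductive hypothesis so that palettes can be recycled across cuts, most likely by tracking for each piece not merely a chromatic bound but a structured list of color slots indexed by type, and then using the clique-subadditivity above to bound the number of colors any fully-joined type may consume. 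Engineering this palette-reuse scheme is the heart of the argument; once in place, Theorem~\ref{thm1} follows immediately by applying the closure statement to any rank-decomposition of $G$ of width at most $k$.
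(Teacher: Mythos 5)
Your reduction of Theorem~\ref{thm1} to a general closure statement is the right first move, and your structural observations are sound: a cut of rank at most $k$ has at most $2^k$ neighborhood types on each side, each cross-pair of types is complete or empty, and $\omega(G[A_i])+\omega(G[B_j])\le\omega(G)$ for fully joined pairs. But the proposal stops exactly where the proof has to begin. You explicitly defer the central step --- ``engineering this palette-reuse scheme'' so that the number of colors does not double at every level of the decomposition tree --- and that step is the entire content of the theorem. An induction on the tree that splits at a balanced edge and tries to coordinate palettes across the types of a single cut does not obviously close: when you recurse into $G[A]$, the types of the cut $(A,B)$ interact with the types of every deeper cut, and the coordination constraints compound; nothing in the proposal bounds the resulting number of colors by a function of $\omega(G)$ and $k$ alone. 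As written, this is a plan with a named hole, not a proof.

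The paper escapes this trap by inducting on a different parameter entirely: the clique number, not the size or depth of the tree. Its key lemma (Lemma~\ref{lemma-key}) processes the decomposition tree from a root downward and produces a single (improper) coloring with at most $d(k+1)$ colors --- $d$ the diversity of the decomposition, $k$ the chromatic bound on the pieces $G_{T,v}$ --- with the property that no maximum clique of $G$ is monochromatic; the point is that any monochromatic edge is forced to lie inside one neighborhood class of some cut, so a monochromatic clique would have a common outside neighbor and could not be maximum. Each color class then induces a subgraph with strictly smaller clique number that still inherits a bounded-rank $\GG$-bounded decomposition, and the product of the two colorings finishes the induction, giving a bound of roughly $2^{r\omega}\prod(f(\omega)+1)$. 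This sidesteps palette coordination altogether: colors are reused freely because properness is only required after all $\omega(G)$ rounds of refinement. Note also that in the paper's framework the relevant $\chi$-bounded class for rank-decompositions is not the single-vertex leaves but the graphs $G_{T,v}$ at internal nodes, which are $3$-partite and hence $3$-colorable; you would need an analogous observation to instantiate your closure statement. If you want to salvage your route, you should replace the tree induction by an induction on $\omega(G)$ and aim for a ``no monochromatic maximum clique'' coloring rather than a proper one.
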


\begin{theorem}
\label{thm2}
The class of graphs without a vertex-minor isomorphic to the wheel $W_5$ is $\chi$-bounded.
\end{theorem}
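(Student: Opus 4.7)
The plan is to apply the paper's main decomposition theorem (the one used to derive Theorem~\ref{thm1}): if a base class $\GG$ is $\chi$-bounded, then so is any class admitting a decomposition along cuts of small rank into members of $\GG$. Thus the proof reduces to two tasks: identifying a good base class $\GG$, and showing that every graph with no vertex-minor isomorphic to $W_5$ admits a decomposition of the desired form into graphs from $\GG$.

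For the base class I would take the class of \emph{circle graphs}. By Kostochka's theorem, circle graphs are $\chi$-bounded (with $\chi \le O(\omega \cdot 2^\omega)$ or better), so this class is admissible as input to the paper's main theorem. The reason circle graphs are a natural target is Bouchet's celebrated characterization: a graph is a circle graph if and only if it contains none of $W_5$, $W_7$, and $BW_3$ as a vertex-minor. So the $W_5$-exclusion is already half of the obstruction set for being a circle graph.

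The core structural step is to decompose a $W_5$-vertex-minor-free graph along low-rank cuts into pieces that are circle graphs. The natural candidate for the decomposition is Cunningham's \emph{split decomposition}: a split is precisely a cut of cut-rank one, so splits fit the low-rank-cut framework of the paper. Split decomposition expresses $G$ as a tree of \emph{prime} pieces (together with cliques and stars, which are trivially circle graphs). The key lemma I would aim to prove is:
\begin{quote}
\emph{If $G$ is a prime graph with no vertex-minor isomorphic to $W_5$, then $G$ is a circle graph.}
\end{quote}
Granting this lemma, each piece of the split decomposition of a $W_5$-free graph is a circle graph, the paper's decomposition machinery applies with $\GG$ the class of circle graphs, and Theorem~\ref{thm2} follows.

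The main obstacle is the lemma displayed above. By Bouchet's theorem it suffices, for a prime $W_5$-vertex-minor-free graph $G$, to rule out $W_7$ and $BW_3$ as vertex-minors. The natural strategy is: if $G$ contains $W_7$ (resp.\ $BW_3$) as a vertex-minor, locate the subgraph realizing this vertex-minor inside a single prime piece (which one can arrange because a vertex-minor lying across a split factors through one side), and then explicitly exhibit a sequence of local complementations and vertex deletions reducing $W_7$ (resp.\ $BW_3$) to $W_5$, contradicting the hypothesis. Carrying out the $W_7 \Rightarrow W_5$ and $BW_3 \Rightarrow W_5$ reductions inside a prime graph is the technical heart of the argument, and is where the "somewhat technical" decomposition notion introduced earlier in the paper is likely to be needed in place of the classical split decomposition, to ensure the pieces behave well with respect to the vertex-minor relation.
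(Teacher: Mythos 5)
Your overall strategy -- decompose along rank-one cuts (splits/$1$-joins) into a $\chi$-bounded base class containing the circle graphs, then invoke the paper's decomposition machinery -- is exactly the route the paper takes: it derives Theorem~\ref{thm2} from Theorem~\ref{thm3} (closure under repeated $1$-joins) together with a structure theorem of Geelen for $W_5$-vertex-minor-free graphs. However, your key lemma, ``every prime graph with no $W_5$ vertex-minor is a circle graph,'' is false, and the proposed proof of it cannot be carried out. The graph $W_7$ itself is a counterexample: it is prime with respect to split decomposition, it has no vertex-minor isomorphic to $W_5$ (this is precisely why it appears as an exceptional basic graph in Geelen's theorem), and it is not a circle graph, since by Bouchet's characterization it contains $W_7$ as a vertex-minor, namely itself. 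Consequently the ``technical heart'' you describe -- exhibiting local complementations and deletions reducing $W_7$ to $W_5$ inside a prime host -- is impossible in general. The correct statement of the structural step (Geelen's Theorem 5.14, quoted in the paper as Theorem~\ref{thm-geelen}) is that a connected graph with no $W_5$ vertex-minor is a circle graph, \emph{or} is obtained from $W_7$, the cube $C$, or $C^-$ by neighborhood complementations, \emph{or} is a $1$-join of two smaller such graphs.

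The repair is cheap and is what the paper does: enlarge the base class $\GG$ to the circle graphs together with the finitely many exceptional graphs (all on at most $8$ vertices, hence of bounded chromatic number), which keeps $\GG$ $\chi$-bounded, and then apply Theorem~\ref{thm3}. Note also that the paper does not reprove the structural decomposition; it cites Geelen's result, which is a substantial theorem in its own right, whereas your plan would require establishing it from scratch. Once you add the exceptional graphs to your lemma and to your base class, your argument coincides with the paper's.
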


We derive Theorem~\ref{thm2} from a structural characterization
of graphs avoiding $W_5$ by Geelen~\cite{geelen} and another corollary of our
main result. Recall that a $1$-join of two graphs $G_1$ and $G_2$
with two distinguished vertices $v_1$ and $v_2$, respectively, is the graph obtained
by deleting the vertex $v_i$ from $G_i$, $i=1,2$, and adding an edge
between every neighbor of $v_1$ in $G_1$ and every neighbor of $v_2$ in $G_2$.
The corollary we use to derive Theorem~\ref{thm2} is the following result.

\begin{theorem}\label{thm3}
Let $\GG$ be a $\chi$-bounded class of graphs closed under taking induced subgraphs.
If $\GG'$ is the class of graphs that can be obtained from graphs of $\GG$
by repeated applications of $1$-joins,
then the class $\GG'$ is also $\chi$-bounded.
\end{theorem}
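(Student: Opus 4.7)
The plan is to derive Theorem~\ref{thm3} directly from the paper's main $\chi$-boundedness result for classes admitting decompositions along cuts of small rank, applied with rank parameter $1$. The point is that a $1$-join is exactly a cut of rank at most one, so the iterated $1$-join construction in the statement of Theorem~\ref{thm3} gives precisely the special case of the main theorem at rank $1$.

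The first step is the rank-one observation. If $G$ is the $1$-join of $G_1$ and $G_2$ at distinguished vertices $v_1$ and $v_2$, write $S_i = V(G_i)\setminus\{v_i\}$, $A = N_{G_1}(v_1)$, and $B = N_{G_2}(v_2)$. Then the bipartite adjacency block of $G$ between $S_1$ and $S_2$ equals the outer product $\mathbf{1}_A\mathbf{1}_B^{\top}$ of two $0/1$ vectors, and therefore has rank at most $1$.

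The second step is to globalise this to the full construction. Given $G\in\GG'$, I would model its construction by a rooted binary decomposition tree $T$ whose leaves are labelled by graphs from $\GG$ and whose internal nodes correspond to $1$-joins. Each internal node $u$ induces a partition $(X_L^u, X_R^u)$ of $V(G)$ recording which of the two subtrees below $u$ contributed each vertex. I would then prove by induction on the depth of $u$ that the bipartite adjacency matrix of $G$ between $X_L^u$ and $X_R^u$ still has rank at most $1$. The crucial point in the induction is that any $1$-join carried out higher up in $T$ adds new edges only between the neighbourhood of its own distinguished vertex (which lies entirely on one side of the cut at $u$) and vertices lying outside $X_L^u\cup X_R^u$; the bipartite block between $X_L^u$ and $X_R^u$ is therefore not altered, so its rank remains at most $1$.

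Combining these, every $G\in\GG'$ admits a decomposition along cuts of rank at most $1$ whose leaf pieces are induced subgraphs of graphs from $\GG$ and hence, since $\GG$ is closed under induced subgraphs, again members of $\GG$. Applying the main theorem of the paper with rank parameter $1$ to the $\chi$-bounded hereditary class $\GG$ then yields that $\GG'$ is $\chi$-bounded. The only subtle point I anticipate is purely bookkeeping: matching the informal decomposition tree described above with the precise technical framework for rank decompositions developed earlier in the paper, and in particular handling the distinguished vertices $v_1,v_2$, which are deleted by each $1$-join but encode the neighbourhood information on each side of the cut. Beyond that matching, no further idea should be needed.
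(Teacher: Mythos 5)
Your overall strategy --- reduce to Theorem~\ref{thm-main} with rank parameter $1$ --- is exactly the paper's, and your rank-one computation for a single $1$-join is correct. But there is a genuine gap in the final step, precisely at the point you dismiss as ``purely bookkeeping.'' In the paper's framework a $\GG$-bounded decomposition requires that the graphs $G_{T,v}$ lie in $\GG$, and $G_{T,v}$ is a \emph{spanning} subgraph of $G$ containing every edge whose endpoints are separated by (or mapped to) the node $v$ --- in particular all the cross edges created by the $1$-joins incident to the piece at $v$. For the node carrying the vertices of $G_i$, this graph is not an induced subgraph of $G_i$: since the distinguished vertices $w_{i,j}$ are deleted by the joins, the external vertices that end up adjacent into $G_i$'s part form, for each tree neighbour $v_j$, a set all of whose members have the same neighbourhood $N_{G_i}(w_{i,j})$ inside $G_i$. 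That is, $G_{T,v_i}$ is $G_i$ with each distinguished vertex \emph{blown up} into an independent set, together with some isolated vertices, and a hereditary class need not contain blow-ups of its members. So your claim that the pieces ``are induced subgraphs of graphs from $\GG$ and hence \ldots{} again members of $\GG$'' is false as stated, and Theorem~\ref{thm-main} cannot be applied to $\GG$ itself.

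The missing idea is small but necessary: replace $\GG$ by the class $\GG''$ of all graphs obtained from members of $\GG$ by adding isolated vertices and blowing up vertices into independent sets, check that $\GG''$ is still $\chi$-bounded and closed under taking induced subgraphs (this holds because a blow-up has the same clique number and chromatic number as the graph it comes from), and then apply Theorem~\ref{thm-main} with rank $1$ and the class $\GG''$. This is exactly the step the paper's proof supplies.
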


In the next section, we formally introduce the decompositions we study and
derive Theorems~\ref{thm1}--\ref{thm3} from our main result (Theorem~\ref{thm-main}).
Theorem~\ref{thm-main} is then proven in Section~\ref{sec-proof}.

\section{Decomposition along small cuts}

If we want to allow the existence of cliques of arbitrary order,
we cannot restrict the size of the cuts along those we split.
On the other hand, such cuts should not be completely arbitrary and
thus it is natural to restrict their complexity.
This leads to the following definition.
For a graph $H$ and a proper subset $W$ of vertices of $H$,
the \emph{matrix of the cut $(W,V(H)\setminus W)$} is the $|W|\times |V(H)\setminus W|$ matrix $M$
indexed by $W$ and $V(H)\setminus W$
such that $M_{uv}$ is $1$ if $uv\in E(H)$ and $0$ otherwise.
The \emph{rank} of the cut is the rank of its matrix over $F_2$.
The \emph{diversity} of the cut is the size of the largest set $S$
such that $S\subseteq W$ or $S\subseteq V(H)\setminus W$ and the vertices of $W$
have mutually distinct neighborhoods in the other side of the cut.
In other words, the diversity of the cut is the maximum number of different rows or
columns of the its matrix.
Note that if the rank of the cut is $r$, then its diversity is at most $2^r$, and,
conversely, if the diversity is $d$, then its rank is at most $d$.

A natural way of decomposing a graph is assigning its vertices to nodes of a tree.
Formally, a \emph{decomposition} of a graph $G$ is a tree $T$ with
a mapping $\tau:V(G)\to V(T)$.
Each edge $e$ of the tree $T$ naturally defines a cut in the graph $G$
with sides being the preimages of the vertex sets of the two trees obtained from $T$ by removing $e$.
The {\em rank} of a decomposition is the maximum of the ranks of the cuts
induced by its edges. Analogously, we define the {\em diversity} of a decomposition.

Every graph admits a decomposition of rank one with $T$ being a star,
so restricting the rank of decompositions is too weak. One way to
circumvent this is to restrict the structure of the decomposition. For example,
if we require that all inner nodes of $T$ have degree three and that
the vertices of $G$ are injectively mapped by $\tau$ to the leaves of $T$,
the rank of the smallest such decomposition is the {\em rank-width} of $G$,
a well-studied parameter in structural graph theory. In this paper, we proceed
in a different (more general) way.

If a tree $T$ with a mapping $\tau$ is a decomposition of a graph $G$ and
$v$ is a node of $T$, then $G_{T,v}$ is the spanning subgraph of $G$ such that
two vertices $u$ and $u'$ of $G$ are adjacent in $G_{T,v}$
if they are adjacent in $G$ and
the node $v$ lies on the unique path between $\tau(u)$ and $\tau(u')$ in $T$ (possibly,
$v$ can be $\tau(u)$ or $\tau(u')$). In other words, $G_{T,v}$
is the spanning subgraph of $G$ where we remove edges between vertices $u$ and $u'$ such that
$\tau(u)$ and $\tau(u')$ lies in the same component of $T\setminus v$.
For a class $\GG$ of graphs,
we say that the decomposition $T$ is {\em $\GG$-bounded}
if the graph $G_{T,v}$ belongs to $\GG$ for every $v\in V(T)$.

Our main result is the following theorem.
\begin{theorem}\label{thm-main}
Let $\GG$ be a $\chi$-bounded class of graphs closed under taking induced subgraphs and
$r$ an integer.
The class of graphs admitting a $\GG$-bounded decomposition with rank at most $r$ is $\chi$-bounded.
\end{theorem}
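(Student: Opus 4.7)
The natural approach is induction on $\omega=\omega(G)$, producing an explicit bound $F(\omega)$ with $F(0)=0$; writing $d=2^r$ for the diversity bound and $f$ for the $\chi$-bounding function of $\GG$, I would aim for a recursion of the rough form $F(\omega)\le d\cdot F(\omega-1)+f(\omega)$, which stays finite at every $\omega$.  The structural observation that drives the induction is the following.  For any cut $(A,B)$ of rank at most $r$ in the decomposition, $A$ partitions into at most $d$ type classes $A_1,\ldots,A_d$, each consisting of vertices with a common neighborhood in $B$; if the common external neighborhood $N_i\subseteq B$ of a class $A_i$ is non-empty, then $\omega(G[A_i])\le\omega(G)-1$, since any clique in $A_i$ together with a vertex of $N_i$ forms a larger clique in $G$.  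The restriction of $(T,\tau)$ to $A_i$ is still a $\GG$-bounded rank-$\le r$ decomposition of $G[A_i]$, because each $G[A_i]_{T,v}$ is an induced subgraph of $G_{T,v}\in\GG$ and $\GG$ is closed under induced subgraphs; the outer inductive hypothesis therefore yields $\chi(G[A_i])\le F(\omega-1)$ for every such ``non-trivial'' type class.

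To assemble a coloring of $G$ from these pieces, I would prove a strengthened statement by a secondary induction on $|V(T)|$: after rooting $T$, for each node $v$, the subgraph induced on the vertices of the subtree at $v$ admits a proper coloring whose palette of size at most $F(\omega)$ is partitioned into disjoint sub-palettes, one per non-trivial type class of the cut between $v$ and its parent, each of size at most $F(\omega-1)$.  In the induction step I would split at a suitable edge of $T$, apply the strengthened statement recursively to each side, and then glue using three ingredients: the disjoint sub-palettes on non-trivial type classes automatically colour each cross-edge correctly since cross-edges go only between non-trivial classes of opposite sides; the bound $\chi(G_{T,v})\le f(\omega)$ handles the ``new'' edges at the splitting node $v$, namely those inside $\tau^{-1}(v)$ and those between distinct subtrees at $v$; and a refinement-and-merge step reconciles the type partition seen at the inner cut with the coarser type partition seen at the boundary cut of the rooted subtree, exploiting that a type class for the deeper cut projects into a single type class for the shallower cut.

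The main obstacle is the handling of ``trivial'' type classes, i.e.\ subsets of one side of a cut whose common external neighborhood is empty, for which the clique-reduction argument fails and the outer inductive hypothesis on $\omega-1$ cannot be invoked.  The plan is to push such a class further down the tree: deeper cuts may break it into sub-classes that become non-trivial relative to their own boundary and are therefore covered by the outer hypothesis, while any ``residually trivial'' vertices ultimately end up within a single bag $\tau^{-1}(v)$ and are coloured using $\chi(G_{T,v})\le f(\omega)$, contributing the additive $f(\omega)$ term in the recursion for $F$.  Verifying that the overall palette size stays bounded by $F(\omega)$ throughout this nested refinement—and in particular that the bound does not accumulate with $|V(T)|$—is the technical heart of the argument, and it is at precisely this step that both the rank bound (controlling the number of sub-palettes needed per cut) and the $\GG$-bound (absorbing the residually trivial pieces) must interact.
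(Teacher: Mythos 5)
Your structural observation is exactly the one the paper's argument turns on: the ``type classes'' of a low-rank cut that have nonempty external neighborhood cannot contain a maximum clique, so an outer induction on $\omega$ can be applied to them; and your restriction argument showing that these classes inherit a $\GG$-bounded decomposition of rank at most $r$ is correct. The gap is in the assembly. First, the claim that ``the disjoint sub-palettes on non-trivial type classes automatically colour each cross-edge correctly since cross-edges go only between non-trivial classes of opposite sides'' does not follow from your strengthened invariant: that invariant makes the sub-palettes on \emph{one} side of the boundary cut pairwise disjoint, but says nothing about the relation between a sub-palette used on side $A$ and one used on side $B$, and a cross-edge between two classes that happen to receive the same sub-palette can be monochromatic. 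Forcing cross-side disjointness at a single cut already costs $2d\,F(\omega-1)$ colors, and doing it consistently at every edge of $T$ (a vertex lies in one type class per edge on the path from its node to the root, and all of these impose constraints) is precisely where the palette threatens to grow with $|V(T)|$. Second, the ``refinement-and-merge'' step is not just bookkeeping: a non-trivial class $C$ at the shallower cut is a union of classes from several children together with part of the bag $\tau^{-1}(v)$, and after gluing it occupies far more than $F(\omega-1)$ colors; recoloring $G[C]$ down to $F(\omega-1)$ colors (which the outer hypothesis permits) destroys properness against the trivial class and the bag, which share the full palette with $C$. You flag this as the technical heart, but it is not a verification detail --- as proposed, the additive recursion $F(\omega)\le d\,F(\omega-1)+f(\omega)$ does not survive these two steps.

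The paper escapes both difficulties by weakening what the one-pass tree construction must deliver. Its key lemma does \emph{not} produce a proper coloring: it produces a coloring with $d(k+1)$ colors (where $k$ bounds $\chi(G_{T,v})$) in which every monochromatic edge has both endpoints in a common non-trivial type class $V_{v'}^{j}$, $j>0$, of some cut. Improper edges are tolerated, so there is no need to coordinate sub-palettes across the two sides of a cut or across levels of $T$; colors are freely reused as the construction sweeps the tree, using at each node a product $(\psi_1,\psi_2)$ of a proper $k$-coloring of $G_{T,v}$ (with twins colored alike) and the type index. A monochromatic maximum clique would then lie in a single $V_{v'}^{j}$ with $j>0$ and hence have a common neighbor --- impossible. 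Properness is only recovered afterwards, by the outer induction on $\omega$ applied to each color class and a product coloring, which is why the paper's bound is the multiplicative $f'(s)=2^{rs}\prod(f(s)+1)$ rather than your hoped-for additive recursion. If you want to salvage your approach, the cleanest repair is to make the same relaxation: aim only for a clique-breaking coloring in the tree sweep and accept a multiplicative recursion in $\omega$.
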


We would like to note that the definition of $\GG$-bounded decompositions can also be
extended as follows. Let $\GG_1$ and $\GG_2$ be two classes of graphs. We can require that
the subgraphs of $G$ induced by $\tau^{-1}(v)$, $v\in V(T)$, belong to $\GG_1$ and
the subgraphs $G_{T,v}\setminus \tau^{-1}(v)$ to $\GG_2$. So, the class $\GG_1$
controls the complexity of subgraphs induced by preimages of individual nodes and
the class $\GG_2$ controls the mutual interaction between different pieces of the decomposition.
However, if both $\GG_1$ and $\GG_2$ are $\chi$-bounded and closed under taking induced subgraphs,
this does not lead to a more general concept.
Indeed, let $\GG$ be the class of graphs $G$ that admit a vertex partition
$V_1$ and $V_2$ such that $G[V_i]\in\GG_i$ for $i=1,2$.
If $\GG_1$ and $\GG_2$ are $\chi$-bounded and closed under taking induced subgraphs,
so is $\GG$, and any decomposition with respect to $\GG_1$ and $\GG_2$ as
defined in this paragraph is $\GG$-bounded.

We now derive Theorems~\ref{thm1}--\ref{thm3} from Theorem~\ref{thm-main}.
The decompositions appearing in the definition of the rank-width of a graph,
which is given earlier in this section, are $\GG$-bounded where $\GG$ is the class of $3$-colorable graphs.
So, an application of Theorem~\ref{thm-main} for this class $\GG$ yields Theorem~\ref{thm1}.

The proof of Theorem~\ref{thm3} is more complicated.

\begin{proof}[Proof of Theorem~\ref{thm3}.]
Let $G$ be a graph contained in $\GG'$.
By the definition of $\GG'$, there exists a tree $T$ with nodes $v_1,\ldots,v_n$ and
graphs $G_1,\ldots,G_n$ from $\GG$ such that
\begin{itemize}
\item for every edge $v_iv_j$ of $T$, the graphs $G_i$ and $G_j$ contain
      distinguished vertices $w_{i,j}\in V(G_i)$ and $w_{j,i}\in V(G_j)$, respectively,
      and these vertices are different fordifferent choices of $v_iv_j$, and
\item the graph $G$ is obtained from the graphs $G_1,\ldots,G_n$ by $1$-joins
      with respect to vertices $w_{i,j}$ and $w_{j,i}$, $v_iv_j\in E(T)$.
\end{itemize}
Note that the order of $1$-joins does not affect the result of the procedure.

Let $\GG''$ be the class of graphs that can be obtained from $\GG$
by adding isolated vertices and blowing up some vertices to independent sets, i.e., replacing a vertex $w$
with and independent set $I$ and joining each vertex of $I$ to the neighbors of $w$.
Since $\GG$ is $\chi$-bounded and closed under taking induced subgraphs,
so is $\GG''$.
Since the tree $T$ with mapping $\tau$
that maps a vertex $u\in V(G)$ to the node $v_i$ such that $u\in V(G_i)$
is a $\GG''$-bounded decomposition of $G$ and its rank is at most one,
Theorem~\ref{thm3} now follows from Theorem~\ref{thm-main}.
\end{proof}

To derive Theorem~\ref{thm2} from Theorem~\ref{thm3},
we need the following result of Geelen~\cite{geelen}.
Observe that since the class of circle graphs is $\chi$-bounded~\cite{kostochka1997covering} and
all other basic graphs appearing in Theorem~\ref{thm-geelen}
have at most $8$ vertices, Theorem~\ref{thm2} directly follows from Theorem~\ref{thm3}.

\begin{theorem}[Geelen~\cite{geelen}, Theorem 5.14]
\label{thm-geelen}
If $G$ is a connected graph without a vertex-minor isomorphic to $W_5$, then one of the following holds:
\begin{itemize}
\item $G$ is a circle graph, or
\item $G$ is can be obtained from a graph isomorphic to $W_7$, the cube $C$ or the graph $C^-$,
that is the cube with one vertex removed, by a sequence of neighborhood complementations, or
\item there exist connected graphs $G_1$ and $G_2$ without a vertex-minor isomorphic to $W_5$ that
      have fewer vertices than $G$, such that $G$ is a $1$-join of $G_1$ and $G_2$.
\end{itemize}
\end{theorem}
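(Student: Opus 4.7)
This is Geelen's deep structural decomposition theorem, cited here as Theorem~5.14 of~\cite{geelen}, and it is invoked in the present paper as a black box, so strictly speaking there is nothing to prove. If I were asked to attempt a proof from scratch, I would follow the standard template for Robertson--Seymour style structure theorems: first reduce to the case where $G$ is suitably ``connected'' along low-rank cuts, and then classify the highly connected obstructions that remain.

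The first step would be to introduce a notion of connectivity appropriate to vertex-minor theory. I would call a vertex partition $(A,B)$ of $V(G)$ with $|A|,|B|\ge 2$ whose cut has rank at most one a \emph{split}. If $G$ admits a split, a short argument using local complementations to simplify the cut should show that $G$ is a $1$-join of two connected graphs $G_1$ and $G_2$, each a proper vertex-minor of $G$ and therefore still avoiding $W_5$. This delivers the third alternative of the conclusion.

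The second step would be to classify the split-free connected graphs that avoid $W_5$ as a vertex-minor. Here I would appeal to Bouchet's classical characterization of circle graphs by a finite list of minimal forbidden vertex-minors, in which $W_5$ appears. If $G$ is not a circle graph, then $G$ must contain one of the other Bouchet obstructions as a vertex-minor; the split-free assumption, together with closure under local complementation, should heavily restrict how such an obstruction can sit inside $G$, and one hopes to conclude that the only remaining possibilities, up to sequences of local complementations, are $W_7$, the cube $C$, and $C^-$.

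The hard part will certainly be this final classification: demonstrating that the exceptional list consists of exactly these three graphs demands a careful finite case analysis, propagating outward from the forbidden Bouchet obstructions under split-freeness and local complementation, and keeping track of which combinations of local complementations are reachable. This is the substance of Geelen's long proof, and reproducing it falls well outside the scope of a sketch; happily the present paper uses only the statement of the theorem.
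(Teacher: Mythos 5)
You are right that the paper offers no proof of this statement: it is imported verbatim as Theorem~5.14 of Geelen's thesis~\cite{geelen} and used purely as a black box to derive Theorem~\ref{thm2} from Theorem~\ref{thm3}, so there is nothing in the paper to compare your sketch against. Your recognition of this is the correct reading, and your outline (reduce along rank-one cuts to a $1$-join, then classify the remaining prime graphs via Bouchet's forbidden vertex-minor characterization of circle graphs) is a plausible account of how Geelen's argument is structured, but it is explicitly a sketch and its correctness cannot be certified from anything in this paper.
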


\section{Proof of Main Theorem}\label{sec-proof}

In this section, we present the proof of Theorem~\ref{thm-main}.
We start with a lemma.

\begin{lemma}\label{lemma-key}
Let $d$ and $k$ be two integers and $G$ a connected graph with at least two vertices.
If $G$ has a decomposition formed by a tree $T$ and a mapping $\tau$ such that
the diversity of the decomposition is at most $d$ and $\chi(G_{T,v})\le k$ for every node $v$ of $T$,
then there exists a (not necessarily proper) coloring of the vertices of $G$
by at most $d(k+1)$ colors such that $\omega(G[\varphi^{-1}(c)])<\omega(G)$ for every color $c$.
\end{lemma}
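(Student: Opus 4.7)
I would prove Lemma~\ref{lemma-key} by induction on the number of nodes of $T$. In the base case $|V(T)|=1$, the graph $G_{T,v}$ coincides with $G$, so $\chi(G)\le k\le d(k+1)$; any proper $k$-coloring of $G$ has every color class independent, with clique number $1<\omega(G)$. For the inductive step I assume $|V(T)|\ge 2$, pick a node $v^*$ of $T$, and set $V_0=\tau^{-1}(v^*)$ and let $V_1,\ldots,V_m$ be the preimages of the connected components of $T\setminus v^*$, with $T_i$ denoting the subtree corresponding to $V_i$.

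The first ingredient is a proper $k$-coloring $\chi^*$ of $G_{T,v^*}$ that takes advantage of the diversity hypothesis. The cut separating $V_i$ from the rest of $V(G)$ has diversity at most $d$, so the vertices of $V_i$ fall into at most $d$ twin classes $V_i^{(1)},\ldots,V_i^{(d)}$ according to their external neighborhoods. In $G_{T,v^*}$ each $V_i^{(l)}$ is a module, so $\chi^*$ can be chosen to be constant on every twin class without increasing the chromatic number. The second ingredient is recursive: for each $i\ge 1$ the decomposition $(T_i,\tau|_{V_i})$ of $G[V_i]$ satisfies the hypotheses of the lemma with a strictly smaller tree (it remains $\GG$-bounded because $\GG$ is closed under induced subgraphs), so the inductive hypothesis yields a coloring $\varphi_i\colon V_i\to\{1,\ldots,d(k+1)\}$ breaking every max clique of $G[V_i]$, and hence every max clique of $G$ contained in $V_i$.

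I then combine $\chi^*$ and the $\varphi_i$'s into a single coloring $\varphi\colon V(G)\to\{1,\ldots,d(k+1)\}$, with the palette identified with $\{1,\ldots,d\}\times\{1,\ldots,k+1\}$. The plan is to embed $\varphi_i|_{V_i^{(l)}}$ into the slice $\{l\}\times\{1,\ldots,k+1\}$, so that vertices in the same twin class share the first coordinate, and to color $V_0$ via $\chi^*$ with the twin-class coordinate chosen so as to avoid conflict with neighbors in the $V_i$'s. The easy part of the verification then runs as follows: a max clique whose vertices hit two different twin classes (possibly across different $V_i$'s) is broken by the first coordinate; a max clique contained in a single $V_i^{(l)}$ is broken by $\varphi_i$ sitting inside its slice; and a max clique meeting $V_0$ in at least two vertices is broken by $\chi^*$, which is a proper coloring of the induced subgraph $G[V_0]$.

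The main obstacle is the delicate case in which a max clique $K$ has $|K\cap V_0|=1$ and the remaining $\omega(G)-1$ vertices in a single twin class $V_i^{(l)}$, with $\varphi_i$ assigning those vertices a single color that coincides with the color of the lone $V_0$ vertex. Avoiding this collision is exactly what forces the sub-color count per slice to be $k+1$ rather than $k$: the extra sub-color provides the slack needed to shift the color of the $V_0$ vertex away from the color used on $K\cap V_i^{(l)}$. Carrying this through cleanly will likely require a mild strengthening of the inductive statement, so that the recursive coloring can be arranged to avoid a prescribed sub-color on each twin class of the parent decomposition; the bookkeeping required to maintain this extra guarantee while preserving the total color budget of $d(k+1)$ is the technical heart of the proof.
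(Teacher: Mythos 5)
Your strategy---recursing bottom-up on the subtrees of $T\setminus v^*$ and assembling the recursive colorings inside a product palette $\{1,\ldots,d\}\times\{1,\ldots,k+1\}$ whose first coordinate records the twin class---is genuinely different from the paper's, which makes a single top-down pass over a rooted tree and colors each vertex exactly once, at the first node below which it has no external neighbors. Unfortunately your version has a gap more basic than the ``delicate case'' you single out. The inductive hypothesis gives a coloring $\varphi_i$ of $G[V_i]$ with up to $d(k+1)$ colors, and $\varphi_i$ restricted to a single twin class $V_i^{(l)}$ may still use all $d(k+1)$ of them; your combination step confines that twin class to a slice of only $k+1$ colors, so the embedding cannot be injective. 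A non-injective compression can merge two color classes of $\varphi_i$ and thereby re-create a monochromatic maximum clique inside $V_i^{(l)}$, so the claim that such a clique ``is broken by $\varphi_i$ sitting inside its slice'' does not follow. The strengthening you propose (reserving one forbidden sub-color per twin class) does not touch this; what you would actually need is that the recursive coloring uses at most $k+1$ colors on each twin class of the parent cut, and arranging that is essentially the entire content of the lemma. A second, independent problem: a maximum clique meeting $V_i^{(l)}$ and $V_j^{(l)}$ for $i\neq j$ gets the same first coordinate on both parts, the second coordinates come from the unrelated colorings $\varphi_i$ and $\varphi_j$, and $\chi^*$ is applied only to $V_0$, so nothing in your scheme breaks such a clique even though the corresponding edges lie in $G_{T,v^*}$.

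The observation you are missing, and which makes the paper's single-pass argument close, is that a maximum clique can never be contained in a twin class whose common external neighborhood is nonempty: the common external neighbor would extend the clique. Consequently the only cliques that must genuinely be broken ``at node $v$'' live among the vertices with no neighbors outside $V_v$, and for those it suffices to take a proper $k$-coloring of $G_{T,v}$ (chosen constant on twins) and refine it by the index of the child twin class containing each vertex---a palette of $dk$ fresh colors per step, with $d$ more reserved for the at most $d$ colors already present on the previously colored part of $V_v$, giving $d(k+1)$ in total. No recursive coloring of $G[V_i]$ with the full palette is ever needed, which is precisely how the paper avoids the budget problem your construction runs into. If you want to keep your recursive formulation, you would have to build this observation into the inductive statement; as written, the induction does not close.
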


\begin{proof}
We can assume (without loss of generality) that
$T$ has a leaf $v_1$ with $\tau^{-1}(v_1)=\emptyset$ and root $T$ at this leaf.
Fix a sequence $T_1\subseteq T_2\subseteq\cdots\subseteq T_n$ of
subtrees of $T$ such that $T_1$ is the subtree formed 
by $v_1$ solely, $T_n=T$ and
$|V(T_i)\setminus V(T_{i-1})|=1$ for every $i=2,\ldots,n$.
Let $v_i$ be the vertex of $T_i$ not contained in $T_{i-1}$.
For an edge $uu'\in E(G)$,
the \emph{origin} of $uu'$ is the nearest common ancestor of $\tau(u)$ and $\tau(u')$ in $T$.

For a node $v$ of $T$, let $T_v$ be the subtree of $T$ rooted at $v$ and
let $V_v$ be the union $\bigcup_{v'\in V(T_v)}\tau^{-1}(v')$. Since the diversity
of the decomposition given by $T$ and $\tau$ is at most $d$,
there exists a partition $V_v^0,\ldots,V_v^d$ of $V_v$ such that
the vertices of $V_v^0$ have no neighbors outside $V_v$ and
two vertices of $V_v$ belong to the same $V_v^j$ if and only if
they have the same neighbors outside of $V_v$.

The set of colors used by the constructed coloring will be $C_0=\{1,\ldots,d(k+1)\}$.
We will construct partial (not necessarily proper) colorings $\varphi_1$, $\varphi_2$, \ldots, $\varphi_n$ of $G$ 
which we use the colors $C_0$ such that
\begin{enumerate}
\item $\varphi_i$ extends $\varphi_{i-1}$ for $i=2,\ldots,m$,
\item $\varphi_i$ assigns colors to all vertices incident with edges whose origin belongs to $T_i$,
      in particular, to all vertices of $\tau^{-1}(V(T_i))$,
\item the vertices of $V_{v_{i'}}^j$ for $i'>i$ and $j=0,\ldots,d$ are either all colored with the same color or
      none of them is colored by $\varphi_i$, $i=1,\ldots,n$, and
\item if $\varphi_i(u)=\varphi_i(u')$ for an edge $uu'$ of $G$ and $i\in\{1,\ldots,n\}$,      
      then there exists $i'$, $2\le i'\le i$, a child $v'$ of $v_{i'}$, and $j\in\{1,\ldots,d\}$ such that 
      both $u$ and $u'$ are not colored by $\varphi_{i'-1}$, they are colored by $\varphi_{i'}$ and
      they both belong to $V_{v'}^j$.
\end{enumerate}

Since $v_1$ is the origin of no edge,
we can choose $\varphi_1$ as the empty coloring.
Suppose that $\varphi_{i-1}$ was already defined and let us describe the construction of $\varphi_i$
from $\varphi_{i-1}$.
Note that the vertices of $V_{v_i}$ that are not colored by $\varphi_{i-1}$ are exactly those of $V_{v_i}^0$.
Furthermore, the restriction of $\varphi_{i-1}$ to $V_{v_i}$ uses at most $d$ colors (one
for each of $V_{v_i}^1$, \ldots, $V_{v_i}^d$). Let $C$ be the set of these colors.

By the assumption of the lemma, there exists a proper coloring $\psi_1$ of the graph $G_{T,v_i}$ using $k$ colors.
Choose $\psi_1$ such that all twins, i.e., vertices with the same set of neighbors, receive the same color.
Let $W$ be the vertices of $V_{v_i}^0$ that have a non-zero degree in $G_{T,v_i}$ or are mapped by $\tau$ to $v_i$.
Let $\psi_2$ be a (not necessarily proper) coloring of the vertices of $W$ by colors $1,\ldots,d$ defined as follows:
if $\tau(w)=v_i$, the vertex $w$ is assigned the color $1$.
For $w\in W\setminus\tau^{-1}(v_i)$, let $v'$ be the child of $v_i$ such that $w\in V_{v'}$.
Set $\psi_2(w)$ to be the index $j$ such that $w\in V_{v'}^j$ (note that $j>0$).
Finally, set $\psi(w)=(\psi_1(w),\psi_2(w))$; $\psi$ is a (not necessarily proper) coloring of the vertices of $W$
with $d\cdot k$ colors.

The coloring $\varphi_i$ is an extension of $\varphi_{i-1}$ to the vertices of $W$ such that
$\varphi_i$ uses the (at least) $d\cdot k$ colors of $C_0\setminus C$ and two vertices of $W$
get the same color if and only if they are assigned the same color by $\psi$.

We now verify that $\varphi_i$ has the properties stated earlier.
\begin{enumerate}
\item This follows directly from the definition of $\varphi_i$.
\item If the origin of an edge $uu'$ of $G$ belongs to $T_i$,
      then it either belongs to $T_{i-1}$ or it is $v_i$.
      In the former case, $u$ and $u'$ are already colored by $\varphi_{i-1}$.
      In the latter case, they both belong to $W$ and are colored by $\varphi_i$.
\item If any vertex of $V_{v_{i'}}^j$ is colored by $\varphi_{i-1}$,
      then all of them are colored by $\varphi_{i-1}$ and they have the same color.
      So, suppose that none of the vertices of $V_{v_{i'}}^j$ is colored by $\varphi_{i-1}$.
      Observe that all vertices of $V_{v_{i'}}^j$ are twins in $G_{T,v_i}$.
      So, if one of them belongs to $W$, then all of them do.
      Also, they are assigned the same color by $\psi_1$.
      Let $v'$ be the child of $v_i$ such that $v_{i'}$ belongs to $T_{v'}$.
      Since the vertices of $V_{v_{i'}}^j$ have the same neighbors outside $V_{v'}$,
      they all belong to the same set $V_{v'}^{j'}$. Consequently, they are assigned
      the same color by $\psi_2$. So, they have the same color assigned by $\psi$ and
      thus by $\varphi_i$, too.
\item Suppose that $\varphi_i(u)=\varphi_i(u')$ and $uu'$ is an edge of $G$.
      If both $u$ and $u'$ are colored by $\varphi_{i-1}$,
      the property follows from the properties of $\varphi_{i-1}$.
      By symmetry, we can now assume that $u$ is not colored by $\varphi_{i-1}$.
      Since $u$ is not colored, $u'$ must belong to $V_{v_i}$.
      If $u'$ were colored by $\varphi_{i-1}$, it would belong to one of the sets $V_{v_i}^1,\ldots,V_{v_i}^d$.
      So, it would hold that $\varphi_i(u')\in C$ but $\varphi_i(u)\in C_0\setminus C$ which is impossible.
      We conclude that neither $u$ nor $u'$ is colored by $\varphi_{i-1}$.
      Since they are colored by $\varphi_i$, they both belong to $W$, and,
      since $\varphi_i$ assigns them the same color, it holds that $\psi(u)=\psi(u')$.
      The definition of $\psi_1$ implies that both $u$ and $u'$ belong to the same $V_{v'}$
      for a child $v'$ of $v_i$ (otherwise, the edge $uu'$ would be contained in $G_{T,v_i}$).
      Finally, the definition of $\psi_2$ implies that they both belong to the same $V_{v'}^j$ for $j>0$.
\end{enumerate}

We claim that $\varphi_n$ is the sought coloring. The number of colors it uses
does not exceed $d(k+1)$. Assume that $G$ contains a monochromatic maximum clique $K$.
The fourth property of the coloring $\varphi_n$ implies that all vertices of $K$
belong to $V_v^j$ for some node $v$ of $T$ and $j>0$. In particular, they have a common
neighbor which is impossible since $K$ is a maximum clique.
\end{proof}

The proof of the main theorem now follows.

\begin{proof}[Proof of Theorem~\ref{thm-main}.]
Let $f$ be the function witnessing that $\GG$ is $\chi$-bounded and
let $f'(s)=2^{rs}\prod_{i=2}^s(f(s)+1)$.
We show that $\chi(G)\le f'(\omega(G))$ for
any graph $G$ admitting a $\GG$-bounded decomposition with rank at most $r$
by induction on the clique number $\omega(G)$.

If $\omega(G)=1$, then $\chi(G)=1\le f'(1)$. Assume that $\omega(G)>1$.
By Lemma~\ref{lemma-key}, the vertices of $G$ can be colored with $2^r(f(\omega(G))+1)$ colors in such a way
that no clique of $G$ of size $\omega(G)$ is monochromatic. Fix such a coloring $\varphi$.
Consider the subgraphs of $G$ induced by the color classes of $\varphi$.
Since each of these subgraphs admits a $\GG$-bounded decomposition with rank at most $r$,
it has a proper coloring with $f'(\omega(G)-1)$ colors.
Coloring each vertex by a pair consisting of the color assigned by $\varphi$ and
the color assigned by the coloring of the corresponding color class of $G$
yields a proper coloring of $G$ with $2^r(f(\omega(G))+1)f'(\omega(G)-1)=f(\omega(G))$ colors.
\end{proof}

\section*{Acknowledgements}

The authors would like to thank DIMACS for hospitality
during the DIMACS Workshop on Graph Coloring and Structure held in Princeton, May 2009,
where they learnt about Conjecture~\ref{conj-vmin} from Jim Geelen.
They are also grateful to Sang-il Oum for bringing Theorem~\ref{thm-geelen}
to their attention as well as for helpful discussions on the subject.

\bibliographystyle{siam}
\bibliography{rankbound}

\begin{thebibliography}{1}

\bibitem{sperft}
{\sc M.~Chudnovsky, N.~Robertson, P.~Seymour, and R.~Thomas}, {\em The strong
  perfect graph theorem}, Annals of Mathematics, 164 (2006), pp.~51--229.

\bibitem{erd-gircol}
{\sc P.~Erd\H{o}s}, {\em Graph theory and probability}, Canad. J. Math, 11
  (1959), pp.~34--38.

\bibitem{garey1979computers}
{\sc M.~Garey and D.~Johnson}, {\em {Computers and Intractability: A Guide to
  the Theory of NP-completeness}}, WH Freeman \& Co. New York, NY, USA, 1979.

\bibitem{geelen}
{\sc J.~Geelen}, {\em Matchings, matroids and unimodular matrices}, PhD thesis,
  University of Waterloo, 1995.

\bibitem{kierstead1994radius}
{\sc H.~Kierstead and S.~Penrice}, {\em {Radius two trees specify
  $\chi$-bounded classes}}, Journal of Graph Theory, 18 (1994), pp.~119--129.

\bibitem{kostochka1997covering}
{\sc A.~Kostochka, A., and J.~Kratochv{\'\i}l}, {\em {Covering and coloring
  polygon-circle graphs}}, Discrete Mathematics, 163 (1997), pp.~299--305.

\bibitem{robertson2003graph}
{\sc N.~Robertson and P.~Seymour}, {\em {Graph minors. XVI. Excluding a
  non-planar graph}}, Journal of Combinatorial Theory, Series B, 89 (2003),
  pp.~43--76.

\bibitem{vizing}
{\sc V.~G. Vizing}, {\em On an estimate of the chromatic class of a p-graph},
  Diskret. Analiz., 3 (1964), pp.~25--30.

\end{thebibliography}

\end{document}